\theoremstyle{plain}
\newtheorem{theorem}{Theorem}[section]
\theoremstyle{remark}
\newtheorem{example}[theorem]{Example}
\theoremstyle{plain}
\newtheorem{lemma}[theorem]{Lemma}
\newtheorem{proposition}[theorem]{Proposition}
\numberwithin{equation}{section}
\newcommand{\R}{\mathbb{R}}
\newcommand{\nN}{n \in \mathbb{N}}
\newcommand{\N}{\mathbb{N}}
\newcommand{\C}{\mathbb{C}}
\newcommand{\E}{\mathbb{E}}
\newcommand{\Z}{\mathbb{Z}}
\newcommand{\Dom}{\mathsf{D}}
\newcommand{\bean}{\begin{eqnarray*}}
\newcommand{\eean}{\end{eqnarray*}}
\newcommand{\la}{\lambda}
\newcommand{\lb}{\langle}
\newcommand{\rb}{\rangle}
\newcommand{\wh}{\widehat}
\newcommand{\n}{\Vert}
\newcommand{\s}{^*}
\newcommand{\e}{\varepsilon}
\newcommand{\limn}{\lim_{n\to\infty}}
\renewcommand{\odot}{\hbox{\tiny\textcircled{s}}}
\newcommand{\calF}{\mathscr{F}}
\renewcommand{\P}{\mathbb{P}}
\newcommand{\one}{\mathbf{1}}
\renewcommand{\H}{{\mathscr H}}
\newcommand{\ga}{\gamma}
\begin{document}

\title[Second quantisation for infinite divisible measures] {Second Quantisation
for Skew Convolution Products of Infinitely Divisible Measures}

\author{David Applebaum}

\email{D.Applebaum@sheffield.ac.uk}
\address{School of Mathematics and Statistics\\
University of Sheffield \\
Sheffield S3 7RH\\
United Kingdom}

\author{Jan van Neerven}
\email{J.M.A.M.vanNeerven@tudelft.nl}
\address{Delft Institute of Applied Mathematics\\
Delft University of Technology\\
PO Box 5031\\ 2600 GA Delft \\
The Netherlands}

\date\today

\begin{abstract} Suppose $\la_1$ and $\la_2$ are infinitely divisible Radon measures
on real Banach spaces $E_1$ and $E_2$, respectively and let $T:E_{1} \rightarrow E_{2}$
be a Borel measurable mapping so that $T(\la_1) * \rho = \la_2 $ for some
Radon probability measure $\rho$ on $E_{2}$. Extending previous results for
the Gaussian and the Poissonian case, we study the problem of representing the
`transition operator' $P_{T}:L^{p}(E_{2}, \la_{2}) \rightarrow
L^{p}(E_{1}, \la_{1})$ given by
$$ P_{T}f(x) = \int_{E_{2}}f(T(x) + y)d\rho(y)
$$
as the second quantisation of a contraction operator acting between suitably chosen
`reproducing kernel Hilbert spaces' associated with $\la_1$ and $\la_2$.
\end{abstract}

\keywords{Second quantisation, infinitely divisible measure,
Wiener-It\^o decomposition, Poisson random measure}

\subjclass[2000]{Primary 60B11, Secondary 60E07, 60G51, 60G57, 60H05}

\thanks{The second named author was supported by VICI subsidy
639.033.604 of the Netherlands Organisation for Scientific Research (NWO)}

\maketitle

\section{Introduction}

Let $E_{i}$ $(i = 1,2)$ be real Banach spaces equipped with Radon probability measures
$\la_{1}$ and
$\la_{2}$, respectively. A Borel measurable mapping $T:E_{1} \rightarrow E_{2}$
is called a {\it skew map} for the pair $(\la_{1}, \la_{2})$ if there exists a
Radon probability measure $\rho$ on $E_{2}$ so that $\la_{2}$ is the convolution
of $\rho$ with the image of $\la_{1}$ under the action of $T$:
$$T(\la_1) * \rho = \la_2 .$$ In this case
for each $1\le p<\infty$ we
obtain a linear contraction  $P_{T}:L^{p}(E_{2}, \la_{2}) \rightarrow
L^{p}(E_{1}, \la_{1})$ given by
$$ P_{T}f(x) = \int_{E_{2}}f(T(x) + y)d\rho(y).$$
Such constructions arise naturally in the study of Mehler semigroups, linear
stochastic partial differential equations driven by additive L\'{e}vy noise, and
operator self-decomposable measures (see, e.g., \cite{BRS, DL, DLSS, Ju1, JV, SchSun}). In this context, the
problem of ``second quantisation'' is to find a functorial manner of expressing
$P_{T}$ in terms of $T$. The reason for this name is that the first work on this
subject \cite{CMG1}, within the context of Gaussian measures, exploited
constructions that were similar to those that are encountered in the
construction of the free quantum field from one-particle space (see e.g.
\cite{Par}) wherein the $n$th chaos spanned by multiple Wiener-It\^{o} integrals
corresponds to the $n$-particle space within the Fock space decomposition.

In our previous paper \cite{AppNee} we implemented this programme and
constructed $P_{T}$ as the second quantisation of $T$ in the two cases where
the measure $\la_{i}$ are Gaussian (generalising \cite{CMG1} and \cite{VN1}), and
are infinitely divisible measures of pure jump type (generalising \cite{Pesz}).
In this article, we complete the programme by dealing with the case where the
$\la_{i}$ are general infinitely divisible measures. Recall that a Radon probability  measure $\la$
on $E$ is said to be {\em infinitely divisible} of for each integer $n\ge 1$ there
exists a Radon probability  measure $\la_{1/n}$ whose $n$-fold convolution
product equals $\lambda$:
$$ \underbrace{\la_{1/n} * \hdots * \la_{1/n}}_{n \ {\rm times}} = \la.$$
These measures $\la_{1/n}$ are unique.

It is well-known that an infinitely divisible Radon probability
measure $\la$ on $E$ admits a unique representation as the convolution
\begin{align}\label{eq:LK} \la = \delta_\xi * \ga * \tilde e_{\rm s}(\nu),
\end{align}
where $\delta_\xi$ is the Dirac measure concentrated at the point $\xi\in E$, $\gamma$ is a centred
Gaussian Radon measure on $E$, and $\tilde e_{\rm s}(\nu)$ is the generalised exponential
of a Radon L\'evy measure $\nu$ on $E$ as in \cite[Theorem 3.4.20]{Hey}.

It is useful to rewrite (\ref{eq:LK}) from the point of view of random variables, rather than measures.
By \cite[Theorem 2.39]{Hey} there exists a semigroup of Radon probability measures $(\la_t)_{t\ge 0}$ such
that $\la = \la_1$. By the celebrated Kolmogorov construction (see, e.g., \cite[pp. 64--5]{App2})
we may construct an $E$-valued process $(X_t)_{t\ge 0}$ such that the
law of $X_t$ is $\la_t$ for each $t\ge 0$.
Using the L\'evy-It\^o decomposition of
Riedle and van Gaans \cite{RvG}, for $t=1$ we may then write
$$ X_1 = \xi + Q + \int_E x\,d\bar{\Pi}(x),$$
where $\xi\in E$ is as in \eqref{eq:LK}, $Q$ is the covariance of $\ga$,
and $\Pi$ is a Poisson random measure whose intensity measure $\nu$ is a L\'evy measure on $E$
and
$$\bar{\Pi}(dx) := \one_{\{0 < \n x\n\le 1\}}{\wh\Pi}(dx) + \one_{\{\n x\n
>1\}}{\Pi}(dx),$$
with $\wh \Pi$ the compensated Poisson random measure,
$$ \wh\Pi(B) := \Pi(B) - \nu(B).$$
In this description, the measure $\tilde e_{\rm s}(\nu)$ is the law of $\int_E x\,d\bar{\Pi}(x)$.

The data $\xi$, $\ga$, $\nu$ are uniquely determined by $\la$.
For more details we refer to \cite{AppNee, Lin, Pesz, RvG}.
In what follows we shall write
\begin{align}\label{eq:pi}\pi:= \delta_\xi * \tilde e_{\rm s}(\nu)
\end{align}
 for brevity.

From \cite{AppNee}, we know that we can effectively realise the second
quantisation of skew maps of $\ga$ in the symmetric Fock space $\Gamma(H_\ga)$
of the reproducing kernel Hilbert space $H_\ga$ of $\ga$; by the Wiener-It\^o chaos
decomposition this space is isomorphic
to $L^{2}(E,\ga)$. To second quantise skew maps of $\pi$, we use the fact that a similar
result holds if instead of the symmetric Fock space over $H_\ga$, we
consider the symmetric Fock space over $L^{2}(E,\nu)$; this is precisely
the approach adopted by Peszat in \cite{Pesz}.

The independence of $X_\ga$ and $X_\pi$ then suggests that in order to
unify these two approaches one should use the
symmetric Fock space over $H_\ga\oplus L^{2}(E,\nu)$. As we shall demonstrate
in this paper, this intuition is correct.

We finish this introduction by fixing some notation. All vector spaces
are real. Unless otherwise stated, Banach spaces are denoted by $E$, $F, \dots,$
and Hilbert spaces by $H$. The dual of a Banach space $E$ is denoted by $E^*$;
the duality pairing between vectors $x\in E$ and $x\s\in E^*$ is written
as $\lb x,x\s\rb$. Using the Riesz representation theorem, the dual of
a Hilbert space $H$ will always be identified with $H$ itself. The Fourier transform of a Radon probability measure
$\mu$ defined on $E$ is the mapping $\widehat{\mu}:E^{*} \rightarrow \C$ for which
$$ \widehat{\mu}(x\s) = \int_{E}e^{i\lb x,x\s\rb}d\mu(x).$$

\section{Skew Convolution Products of Infinitely Divisible Measures}\label{sec:prelim}

We fix two infinitely divisible Radon probability measures $\la_1$ and $\la_2$,
on the Banach spaces $E_1$ and $E_2$ respectively. We furthermore assume that a Borel linear mapping
$T: E_1\to E_2$ is given.
The main result of this section gives a necessary and sufficient condition in order that $T$
be skew with respect to the pair $(\la_1,\la_2)$.

We recall the L\'evy-Khintchine decompositions $\la_i = \ga_i * \pi_i$ of \eqref{eq:LK} and \eqref{eq:pi} (for $i = 1,2$)

\begin{proposition}\label{prop:skew}
 Under these assumptions the following assertions are equivalent:

 \begin{enumerate}
  \item[\rm(1)] $T$ is skew with respect to $(\la_1,\la_2)$ with an infinitely divisible skew factor;
  \item[\rm(2)] $T$ is skew with respect to both $(\ga_1,\ga_2)$ and $(\pi_1,\pi_2)$ with infinitely divisible skew factors.
 \end{enumerate}
 If these equivalent conditions are satisfied, the skew factor $\rho$ in {\rm (1)} and the skew factors
 $\rho_\ga$ and $\rho_\pi$ in {\rm (2)} are related by
 $ \rho = \rho_\ga*\rho_\pi.$
\end{proposition}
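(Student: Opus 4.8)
The plan is to work with Fourier transforms and to lean on the uniqueness of the L\'evy--Khintchine decomposition recalled above. Two elementary facts underpin everything. First, since $T$ is linear we have $T(\mu*\sigma) = T(\mu)*T(\sigma)$ for any Radon probability measures $\mu,\sigma$ (push forward the sum of independent representatives and use additivity of $T$); in particular $T(\la_1) = T(\ga_1)*T(\pi_1)$. Second, the image $T(\ga_1)$ is again a centred Gaussian Radon measure, whereas $T(\pi_1)$ is infinitely divisible with trivial Gaussian part. The first of these is the standard stability of Gaussian measures under measurable linear maps; the second I would extract from the behaviour of the L\'evy--Khintchine exponent under the push-forward $x\s\mapsto T\s x\s$, i.e. from $\widehat{T(\pi_1)}(x\s) = \widehat{\pi_1}(T\s x\s)$, the point being that the exponent of $\pi_1$ contains no quadratic term and push-forward cannot create one.

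Granting these, the implication (2)$\Rightarrow$(1) is immediate. Setting $\rho := \rho_\ga*\rho_\pi$, the measure $\rho$ is infinitely divisible as a convolution of two infinitely divisible measures, and
\[ T(\la_1)*\rho = T(\ga_1)*T(\pi_1)*\rho_\ga*\rho_\pi = \bigl(T(\ga_1)*\rho_\ga\bigr)*\bigl(T(\pi_1)*\rho_\pi\bigr) = \ga_2*\pi_2 = \la_2, \]
which simultaneously establishes the asserted relation $\rho = \rho_\ga*\rho_\pi$.

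For (1)$\Rightarrow$(2) I would start from an infinitely divisible skew factor $\rho$ with $T(\la_1)*\rho = \la_2$ and write its own L\'evy--Khintchine decomposition $\rho = \rho_\ga*\rho_\pi$, with $\rho_\ga$ centred Gaussian and $\rho_\pi$ purely non-Gaussian, both infinitely divisible. Regrouping the factors gives
\[ \bigl(T(\ga_1)*\rho_\ga\bigr)*\bigl(T(\pi_1)*\rho_\pi\bigr) = T(\la_1)*\rho = \la_2 = \ga_2*\pi_2. \]
By the two facts above, the first bracket on the left is a centred Gaussian measure and the second bracket is infinitely divisible with trivial Gaussian part; the same holds for $\ga_2$ and $\pi_2$ on the right. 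Thus both sides exhibit the L\'evy--Khintchine decomposition of $\la_2$ into its Gaussian and non-Gaussian (drift-plus-jump) components, and the uniqueness of that decomposition forces $T(\ga_1)*\rho_\ga = \ga_2$ and $T(\pi_1)*\rho_\pi = \pi_2$, which is precisely assertion (2).

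The step I expect to demand the most care is the second fact, namely that pushing the purely non-Gaussian measure $\pi_1$ forward under the merely Borel linear map $T$ produces no spurious Gaussian component, so that the factors $T(\ga_1)*\rho_\ga$ and $T(\pi_1)*\rho_\pi$ genuinely occupy the Gaussian and non-Gaussian slots. Concretely I must verify that $x\mapsto\lb Tx,x\s\rb$ defines a measurable linear functional on $E_1$ and that the resulting exponent $\widehat{\pi_1}(T\s x\s)$ carries no quadratic part; the companion statements that $T(\ga_1)$ is Radon Gaussian and that the various image measures are Radon at all also need to be pinned down. Once this Gaussian/non-Gaussian dichotomy is secured, uniqueness of the decomposition supplies the rest with no further computation.
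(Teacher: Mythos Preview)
Your argument is correct and rests on the same ingredients as the paper's: the fact that $T(\ga_1)$ is again centred Gaussian, that $T(\pi_1)$ acquires no Gaussian component, and the uniqueness of the L\'evy--Khintchine decomposition. The organisation differs slightly. You decompose the skew factor $\rho=\rho_\ga*\rho_\pi$ at the outset and then invoke uniqueness once to match both slots simultaneously, which has the pleasant side effect of yielding the relation $\rho=\rho_\ga*\rho_\pi$ automatically. The paper instead first proves the auxiliary observation that the Gaussian part of a convolution of two infinitely divisible measures is the convolution of their Gaussian parts, uses this to extract $T\ga_1*\eta=\ga_2$ (with $\eta$ the Gaussian part of $\rho$), and then divides out in Fourier transforms to isolate the Poisson relation. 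Your route is a little more direct; the paper's route makes the Gaussian-matching step explicit as a separate lemma. Both approaches hinge on exactly the point you flag as delicate, namely that pushing $\pi_1$ forward under the Borel linear map $T$ creates no Gaussian component, and the paper handles this in the same way you propose, by reading it off from $\widehat{T\pi_1}(x\s)=\widehat{\pi_1}(T\s x\s)$.
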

\begin{proof}
We begin by making the preliminary observation
that if $\alpha$ and $\beta$ are measures on $E_1$, then
their image measures under $T$ satisfy
$T(\alpha*\beta) = (T\alpha) *( T\beta).$
We shall freely use the properties of infinitely divisible measures
on Banach space as can be found in \cite{Hey,Lin}.

(2)$\Rightarrow$(1): \
From
$$ T\la_1* (\rho_\ga*\rho_\pi) = (T\ga_1* T\pi_1)* (\rho_\ga*\rho_\pi)
=  (T\ga_1* \rho_\ga)* (T\pi_1* \rho_\pi)
= \ga_2 * \pi_2 = \la_2$$
we infer that $T$ is skew for $(\la_1,\la_2)$ with skew factor $\rho_\ga*\rho_\pi$. This measure, being the
convolution of two infinitely divisible measures, is infinitely divisible.

\smallskip
(1)$\Rightarrow$(2): \
By the L\'evy-Khintchine decomposition theorem we have
$\la_i = \delta_{x_i} * \gamma_i * \tilde e_{\rm s}(\nu_i)$  $(i=1,2)$
using the notation introduced before
we have
$$ \la_1* \la_2 = (\delta_{\xi_1} * \gamma_1 * \tilde e_{\rm s}(\nu_1))*(\delta_{\xi_2} * \gamma_2 * \tilde e_{\rm s}(\nu_2))
= \delta_{\xi_1+\xi_2} * (\gamma_i *\ga_2) * \tilde e_{\rm s}(\nu_i+\nu_2).$$
By the uniqueness part of \cite[Theorem 3.4.20]{Hey}, this shows that the Gaussian factor of
$\la_1*\la_2$ equals $\ga_1*\ga_2$.

Now suppose that $T\la_1*\rho = \la_2$ with each of the measures $\la_1,\la_2$, and $\rho$ infinitely divisible.
Then $T\la_1$ is infinitely divisible with $T\la_1 = \delta_{Tx_1}*T\ga_1* T\tilde \e_{\rm s}(\nu_1)$, and
applying the remark of the previous paragraph to $T\la_1$ and $\rho$ we find that the Gaussian factor
of $T\la_1*\rho$ equals $T\ga_1*\eta$, where $\eta$ is the Gaussian factor of $\rho$. It follows
that
$$ T\ga_1*\eta = \ga_2,$$
 that is, $T$ is skew with respect to $(\ga_1,\ga_2)$
with Gaussian factor $\eta$. Taking Fourier transforms, this means that
\begin{equation} \label{use}
\widehat{T\ga_1}\widehat \eta = \widehat \ga_2.
\end{equation}

Finally, taking Fourier transforms in the original identity $T\la_1*\rho=\la_2$ we obtain
$\widehat{T\gamma_1}\widehat{T\pi_1}\widehat{\rho} = \widehat{\ga_2}\widehat{\pi_2}$
or equivalently, utilising (\ref{use})
$$ \widehat{T\pi_1}\Big(\frac{\widehat{T\gamma_1}}{\widehat{\ga_2}}\widehat{\rho}\Big) =
\widehat{T\pi_1}  \widehat{\eta} \widehat{\rho}= \widehat{T\pi_1}  \widehat{\eta *\rho}=\widehat{\pi_2}.$$
 From this we see that $T$ is skew with respect to $(\pi_1,\pi_2)$, with skew factor
 $\eta*\rho$.
\end{proof}

It is not true in general that $\mu_1*\mu_2 = \mu_3$ with $\mu_1$ and $\mu_3$ infinite divisible implies
the infinite divisibility of $\mu_2$. The following counterexample (in the case $E = \R$) is due to Jan Rosi\'nski who
kindly kindly permitted its inclusion here.

\begin{example}[Rosi\'nski]
Consider the signed measure $\nu:= 2\delta_1 + 2\delta_2 - \delta_3 + 2\delta_4 +2\delta_5$, where $\delta_x$ is the usual Dirac mass at $x \in \R$.
We claim that $$\phi(t):= \exp\Big(\int_0^\infty  (e^{itx}- 1) \,d\nu(x)\Big)$$
is the characteristic function of some non-negative random variable $Z$. This random variable cannot be infinitely
divisible. Indeed, if it were, $\nu$ would be its L\'evy measure, which is impossible because a L\'evy
measure is non-negative and unique. Therefore, to complete a counterexample we need to show
that $\phi$ is a characteristic function. Consider
$$
e(\nu) :=
\sum_{n=0}^\infty \frac{\nu^{*n}}{n!} .$$
First we compute
$$ \nu^{*2} =  4\delta_2 + 8\delta_3 + 4\delta_5 + 17\delta_6 + 4\delta_7 + 8\delta_9 + 4\delta_{10} $$
and
$$ \nu^{*3} = 8\delta_3 + 24\delta_4 + 12\delta_5 + 8\delta_6 + 66\delta_7 + 54\delta_8 - \delta_9 + 54\delta_{10}
+ 66\delta_{11} + 8\delta_{12} + 12\delta_{13} + 24\delta_{14} + 8\delta_{15} .$$
We have
$$ \nu^{*2}\ge 0, \quad  \nu+\frac18\nu^{*2}\ge 0, \quad \nu^{*2}+c\nu^{*3}\ge 0 \quad (0\le c\le 1).$$
Hence
$$
e(\nu) = \delta_0 + (\nu+\frac13\nu^{*2}) + \frac16(\nu^{*2}+\nu^{*3}) +
\sum_{n=2}^\infty \frac{\nu^{*2(n-1)}}{(2n)!} *\Big(\nu^{*2} + \frac{\nu^{*3}}{2n+1}\Big).$$
Consequently, $e(\nu)$ is a finite non-negative measure on $\Z_+$ with $(e(\nu))(\Z_+) = e^{\nu(\Z_+ )} = e^7.$
Take $Z$ to be a random variable with distribution $e^{-7}e(\nu)$. Then the characteristic function of $Z$
equals $\phi$. Now let $X$ be a compound Poisson random variable, independent of $Z$, and with L\'evy measure $\delta_3$.
Then $X+Z$ is compound Poisson with L\'evy measure $ 2\delta_1 + 2\delta_2+ 2\delta_4 + 2\delta_5$.
\end{example}

An interesting case where infinite divisibility of the skew factors is automatic
 occurs in the context of Mehler semigroups; we refer to  \cite{SchSun} for the details.

\section{Second Quantisation}

Suppose $\la$ is an infinitely divisible Radon measure on a real Banach space $E$.
Then we may write $$\la = \gamma*\pi$$ with $\gamma$ a centred Gaussian Radon measure on $E$ and $\pi$ the distribution
of a random variable of the form
$\xi + \int_E x\,d\bar{\Pi}(x)$ as explained in the introduction.

For functions $f\in L^2(\la)$ put
$$F_f(x,y):= f(x+y), \qquad x,y\in E.$$
Using the fact that
$ L^2(\gamma)\widehat\otimes L^2(\pi) = L^2(\gamma\times \pi)$ isometrically
(with $\widehat\otimes$ indicating the Hilbert space tensor product)
it is immediate to verify that
$$ \n f\n_{L^2(\la)}^2 =\int_E \int_E |f(x+y)|^2\,d\gamma(x)\,d\pi(y) = \n F_f\n_{L^2(\gamma)\widehat\otimes L^2(\pi)}^2.$$
As a result the mapping $f\mapsto F_f$ is a linear isometry from $L^2(\la)$ into $L^2(\gamma)\widehat\otimes L^2(\pi)$.
 This brings us to the setting with independence structure as discussed in
\cite{App3}.
Following that reference, formally we define a derivative operator acting with dense domain in
$L^2(\ga)\otimes L^2(\pi)$ by the formula
$$ D := D_\ga \otimes I + I \otimes D_\pi,$$
where we denote the `Gaussian' and the `Poissonian' derivatives with subscripts
$\ga$ and $\pi$, respectively. Recall from \cite{AppNee} that these are defined as follows. The Gaussian derivative is defined
by
$$ D_\gamma f(x) := \sum_{n=1}^N \partial_n g(\phi_{h_1}(x),\dots,\phi_{h_N}(x)) \otimes h_n$$
for cylindrical functions $f = g(\phi_{h_1},\dots,\phi_{h_N})$, with $g\in C_{\rm b}^1(\R^N)$ and
$\phi: h\mapsto \phi_h $ being the isometry
which embeds the reproducing kernel Hilbert space $H_\ga$ of $\ga$ onto the first Wiener-It\^o chaos of $L^2(\ga)$. The space of all such functions $f$ is
dense in $L^2(\ga)$ and $D_\gamma$ is closable as an operator from this initial domain into $L^2(\ga;H_\ga)$.
The Poissonian derivative is defined by
$$ D_\pi f (x)= f(x+\cdot) - f(x).$$
In order to prove that $D_\pi$ is densely defined as an operator from
$L^2(\pi)$ into $L^2(\pi\times \nu)$ we need to find a dense set of functions
$f$ in $L^2(\pi)$ such that
$D_\pi f$ belongs to $L^2(\pi\times \nu)$.
For this, we consider cylindrical functions $f$ of the form
$$ f(x) = g(\lb x,x_1\s\rb, \dots, \lb x,x_N\s\rb)$$
with $g\in C_{\rm b}^1(\R^N)$ and $x_1^*,\dots,x_n^*\in E^*$.
For such $f$ we have, where $0 < \theta_{n}(\cdot) < 1$ for each $\nN$,
\begin{align*}
\ & \n D_\pi f\n^2
\\ & = \int_{E\times E} \Big|
g(\lb x+y,x_1\s\rb, \dots, \lb x+y,x_N\s\rb) - g(\lb x,x_1\s\rb, \dots, \lb x,x_N\s\rb)
\Big|^2\,d\pi(x)\,d\nu(y)
\\ & =  \int\limits_{\{\n y\n> 1\}\times E} \Big|
g(\lb x+y,x_1\s\rb, \dots, \lb x+y,x_N\s\rb) - g(\lb x,x_1\s\rb, \dots, \lb x,x_N\s\rb)
\Big|^2 d\pi(x)\,d\nu(y)
\\ & \ + \!\!
\int\limits_{\{\n y\n \le 1\}\times E} \!\!\Big|\sum_{n=1}^N
(\partial_n g(\lb x+\theta_1(x)y,x_1\s\rb, \dots, \lb x+\theta_N(x)y,x_N\s\rb))\lb y,x_n\s\rb
\Big|^2 d\pi(x)\,d\nu(y)
\\ & \le 4 \n g\n_\infty^2 \nu\{\n y\n> 1\} +   \sum_{n=1}^N \n \partial_n g \n_\infty^2
\int_{\{\n y\n \le 1\}}  |\lb y,x_n\s\rb|^2\,d\nu(y)
\\ & < \infty,
\end{align*}
the finiteness in the last step
being a consequence of the general properties of L\'evy measures on Banach spaces (see \cite[pp. 95--120]{Hey}
or \cite[pp. 69--75]{Lin}).

\begin{lemma}
 $D_\pi$ is closable as a densely defined linear operator from $L^2(\pi)$ to $L^2(\pi\times \nu)$.
\end{lemma}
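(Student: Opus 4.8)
The plan is to invoke the standard criterion that a densely defined operator between Hilbert spaces is closable if and only if its adjoint is densely defined; equivalently, one shows that whenever $f_n\to 0$ in $L^2(\pi)$ and $D_\pi f_n\to G$ in $L^2(\pi\times\nu)$, necessarily $G=0$. The subtle point—and the step I expect to be the main obstacle—is the term $f_n(x+y)$ appearing in $(D_\pi f_n)(x,y)=f_n(x+y)-f_n(x)$: while $f_n(x)\to0$ in $L^2(\pi)$ transparently, the shifted term cannot be controlled directly, since the translates $\tau_y\pi$ of $\pi$ are in general not absolutely continuous with respect to $\pi$, so the convergence $f_n\to0$ in $L^2(\pi)$ carries no information about $f_n(\cdot+y)$.

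To circumvent this I would pass to the probabilistic picture of the introduction, realising $\pi$ as the law of $X:=\xi+\int_E x\,d\bar\Pi(x)$ on a probability space carrying the Poisson random measure $\Pi$ with intensity $\nu$. The key structural observation is that adding a point to the configuration shifts the random variable by exactly that point, $X(\Pi+\delta_y)=X(\Pi)+y$ for $\nu$-almost every $y$; this holds for both the compensated small-jump part and the large-jump part, since the compensator $\nu$ is unaffected by adding $\delta_y$. Consequently, for cylindrical $f$ in the domain of $D_\pi$, the difference operator is precisely the add-one-point increment, $f(X(\Pi+\delta_y))-f(X(\Pi))=(D_\pi f)(X,y)$, and the Mecke identity for Poisson random measures yields the integration-by-parts formula
$$\E\Big[\int_E (D_\pi f)(X,y)\,v(y)\,d\nu(y)\Big]=\E\big[f(X)\,\wh\Pi[v]\big],\qquad \wh\Pi[v]:=\int_E v(y)\,d\wh\Pi(y),$$
valid for $v\in L^1(\nu)\cap L^2(\nu)$, where the compensated integral $\wh\Pi[v]$ lies in $L^2(\P)$ with $\E|\wh\Pi[v]|^2=\n v\n_{L^2(\nu)}^2$. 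This is the device that tames the shifted term, by converting it into a Poisson integral where the isometry provides the missing control.

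From here closability follows by testing against a dense family. For a product test function $\Phi=u\otimes v$ with $u$ bounded and $v\in L^1(\nu)\cap L^2(\nu)$, the same Mecke computation, now applied to the functional $g(y,\omega)=u(X-y)f(X)v(y)$ which reproduces $u(X)f(X+y)v(y)$ after inserting $\delta_y$, expresses $\lb D_\pi f,\Phi\rb_{L^2(\pi\times\nu)}$ as $\E[f(X)R]$ for a single random variable $R\in L^2(\P)$ that does not depend on $f$; the $L^2$-bound on $R$ comes from crude estimates of the form $|\wh\Pi[\,\cdot\,]|\le \n u\n_\infty(\Pi[|v|]+\nu[|v|])$ together with $\E(\Pi[|v|])^2=\nu[v^2]+\nu[|v|]^2<\infty$. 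Taking the conditional expectation given $X$ then gives $\lb D_\pi f,\Phi\rb=\lb f,\E[R\mid X]\rb_{L^2(\pi)}$ with $\E[R\mid X]\in L^2(\pi)$, so $\Phi$ lies in the domain of $D_\pi^*$. Since finite linear combinations of such products $u\otimes v$ are dense in $L^2(\pi\times\nu)=L^2(\pi)\,\widehat\otimes\,L^2(\nu)$, the adjoint $D_\pi^*$ is densely defined, and therefore $D_\pi$ is closable. The only remaining points requiring care are the justification of $X(\Pi+\delta_y)=X(\Pi)+y$ under the truncation in $\bar\Pi$ and the interchange of expectation with the $\nu$-integration, both handled by the boundedness of the cylindrical $f$ and the integrability of $v$.
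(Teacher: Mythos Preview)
Your proposal is correct and takes a genuinely different route from the paper. The paper argues directly at the level of sequences: given $f_n\to 0$ in $L^2(\pi)$ and $D_\pi f_n\to F$ in $L^2(\pi\times\nu)$, it passes to a subsequence with $f_n(x)\to 0$ for $\pi$-a.e.\ $x$ and $f_n(x+y)-f_n(x)\to F(x,y)$ for $\pi\times\nu$-a.e.\ $(x,y)$, and then asserts that for every $y$ one has $f_n(x+y)\to 0$ for $\pi$-a.e.\ $x$, whence $F=0$. Your worry about the shifted term is exactly on point: that assertion amounts to saying that $\pi$-null sets remain $\pi$-null after translation by $y$, which fails for individual $y$ already for a compound Poisson law (the atom at the origin can be mapped onto a point of zero mass). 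What \emph{is} true, and suffices for the argument, is the $\nu$-a.e.\ version, and the cleanest justification of that is precisely the Mecke identity you bring in: from $\E\big[\Pi(B)\mathbf 1_A(\Pi)\big]=\int_B\P(\Pi+\delta_y\in A)\,d\nu(y)$ one reads off that $\P(\Pi\in A)=0$ forces $\P(\Pi+\delta_y\in A)=0$ for $\nu$-a.e.\ $y$, hence $\pi(N)=0$ implies $\pi(N-y)=0$ for $\nu$-a.e.\ $y$. So both proofs ultimately rest on the same Mecke-type input; the paper's route is short once this quasi-invariance is granted, while yours makes the dependence explicit, is self-contained, and as a bonus produces a concrete description of $D_\pi^*$ on elementary tensors. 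The one place to tighten your write-up is the second Mecke computation: with $h(y,\eta)=u(X(\eta)-y)f(X(\eta))v(y)$ you get directly $\int\!\!\int f(x+y)u(x)v(y)\,d\pi(x)\,d\nu(y)=\E\big[f(X)\!\int u(X-y)v(y)\,d\Pi(y)\big]$, and then $R=\int u(X-y)v(y)\,d\Pi(y)-u(X)\nu[v]\in L^2(\P)$ does the job via conditioning on $X$; your sketch has the right idea but the bound you quote for $R$ should be stated for the uncompensated integral $\Pi[|v|]$ rather than $\wh\Pi$.
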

\begin{proof}
 Suppose $f_n \to 0$ in $L^2(\pi)$ and $D_\pi f_n \to F$ in $L^2(\pi\times \nu)$. We must prove that $F = 0$.
 Passing to a subsequence, we may assume that $f_n(x) \to 0$ for $\pi$-almost all $x\in E$
 and $D_\pi f_n(x,y)  = f_n(x+y)-f_n(x)\to F(x,y)$ for $\pi\times\nu$-almost all $(x,y)\in E\times E$.
 Then, by Fubini's theorem, for $\nu$-almost all $y\in E$ we have $f_n(x+y)\to F(x,y)$ for $\pi$-almost all $x\in E$.
 Since for all $y\in E$ we have $f_n(x+y)\to 0$ for $\pi$-almost all $x\in E$, it follows that for
 $\nu$-almost all $y\in E$ we have $F(x,y)=0$ for $\pi$-almost all $x\in E$. Using Fubini's theorem once more,
 it follows that $F(x,y)=0$ for $\pi\times\nu$-almost all $(x,y)\in E\times E$.
\end{proof}

From now on, we use the notations $D_\ga$ and $D_\pi$ for the closures of the operators considered so far
and denote by $\Dom(D_\ga)$ and $\Dom(D_\pi)$ their domains.

\begin{lemma}
 Suppose $T_1:E_1\to F_1$ and $T_2:E_2\to F_2$ are densely defined closed linear operators,
 with domains $\Dom(T_1)$ and $\Dom(T_2)$ respectively. Let $G$ be another Banach space and let $X\widehat\otimes Y$
 denote the completion of $X\otimes Y$ with respect to any
 norm which has the property that $\n x\otimes y\n = \n x\n \n y\n$ for all $x\in X$ and $y\in Y$.
\begin{itemize}
 \item [\rm(1)] The operators $T_1\otimes I: E_1\widehat\otimes G\to F_1\widehat\otimes G$
 and $I\otimes T_2: G\widehat\otimes E_2\to G\widehat\otimes F_2$ with their natural domains
 $\Dom(T_1)\otimes G$ and $G\otimes \Dom(T_2)$
 are closable;
 \item [\rm(2)] The operator $T_1\otimes I + I\otimes T_2: E_1\widehat\otimes E_2 \to F_1\widehat\otimes F_2$
 with its natural domain $\Dom(T_1)\otimes \Dom(T_2)$ is closable.
\end{itemize}
\end{lemma}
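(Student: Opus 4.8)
The plan is to prove closability by exhibiting, for each operator, a total family of functionals that lie in the domain of its Banach space adjoint. Recall the general principle: a densely defined operator $A$ between Banach spaces is closable as soon as one can find a set $\Phi$ of functionals, total over the target space, such that for each $\phi\in\Phi$ the map $u\mapsto \lb Au,\phi\rb$ extends to a bounded functional on the whole space. Indeed, if $u_n\to 0$ in the domain and $Au_n\to v$, then $\lb v,\phi\rb=\lim_n\lb Au_n,\phi\rb=\lim_n\lb u_n,\psi_\phi\rb=0$ for every $\phi\in\Phi$ (where $\psi_\phi$ is the bounded extension), whence $v=0$ by totality. The first thing I would record is the standard fact that, since $T_1$ and $T_2$ are closed and densely defined, their adjoints $T_1^*$ and $T_2^*$ are weak-$*$ densely defined; in particular $\Dom(T_1^*)$ and $\Dom(T_2^*)$ are total over $F_1$ and $F_2$, respectively.

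For (1), I would take as candidate functionals on $F_1\widehat\otimes G$ the elementary tensors $f_1^*\otimes g^*$ with $f_1^*\in\Dom(T_1^*)$ and $g^*\in G^*$. The cross-norm property $\n x\otimes y\n=\n x\n\,\n y\n$ ensures each such $f_1^*\otimes g^*$ is a bounded functional on $F_1\widehat\otimes G$, and a short computation on elementary tensors $u=x\otimes g$ with $x\in\Dom(T_1)$ gives $\lb (T_1\otimes I)u,\,f_1^*\otimes g^*\rb=\lb x,T_1^*f_1^*\rb\lb g,g^*\rb=\lb u,\,(T_1^*f_1^*)\otimes g^*\rb$, so that $u\mapsto\lb (T_1\otimes I)u,\,f_1^*\otimes g^*\rb$ is bounded on $\Dom(T_1)\otimes G$; that is, $f_1^*\otimes g^*\in\Dom((T_1\otimes I)^*)$. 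Totality of $\Dom(T_1^*)$ over $F_1$ combined with totality of $G^*$ over $G$ then upgrades, via the cross-norm property, to totality of the functionals $f_1^*\otimes g^*$ over $F_1\widehat\otimes G$, and closability follows. The operator $I\otimes T_2$ is handled symmetrically.

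For (2) I would resist the temptation to deduce the claim by adding the two closable operators of (1), since a sum of closable operators need not be closable. Instead I would run the same functional argument directly with $\phi=f_1^*\otimes f_2^*$, $f_1^*\in\Dom(T_1^*)$, $f_2^*\in\Dom(T_2^*)$. The Leibniz-type identity $\lb (T_1\otimes I+I\otimes T_2)(x\otimes y),\,f_1^*\otimes f_2^*\rb=\lb x\otimes y,\,(T_1^*f_1^*)\otimes f_2^*+f_1^*\otimes(T_2^*f_2^*)\rb$ shows that $f_1^*\otimes f_2^*\in\Dom((T_1\otimes I+I\otimes T_2)^*)$, and these functionals are once more total over $F_1\widehat\otimes F_2$, so the criterion applies verbatim.

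The main obstacle is the totality step: promoting totality of the factor adjoint domains $\Dom(T_i^*)$ to totality of the product functionals on the completed tensor product, and, hand in hand with it, the boundedness of the elementary tensor functionals themselves. Both rest on the cross-norm hypothesis $\n x\otimes y\n=\n x\n\,\n y\n$, which forces the norm to dominate the injective cross norm and thereby makes the functionals $f_i^*\otimes g^*$ manifestly bounded and separating; by contrast, the adjoint computations above are entirely routine.
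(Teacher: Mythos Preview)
Your argument is correct and is essentially the same as the paper's: both proofs use the criterion that a densely defined operator is closable as soon as its adjoint has weak$^*$-dense (equivalently, total) domain, and both verify this for part (2) via the operator inclusion $T_1^*\otimes I + I\otimes T_2^* \subseteq (T_1\otimes I + I\otimes T_2)^*$, which is exactly your Leibniz-type identity. You have simply filled in the details the paper leaves to the reader, including the explicit role of the cross-norm hypothesis in making the elementary tensor functionals bounded and separating.
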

\begin{proof} Part (1) is immediate from the fact that $\n x\otimes y\n = \n x\n \n y\n$; part (2)
follows from the fact that a densely defined linear operator is closable if and  only if its
domain is weak$\s$-densely defined, along with the operator inclusion
$T_1^*\otimes I + I \otimes T_2\s \subseteq (T_1\otimes I + I \otimes T_2)\s$.
The details are left to the reader.
\end{proof}

To proceed any further we define, for $n=0,1,2,\dots$, the Hilbert spaces
$$\H_n := \bigoplus_{{\substack{j,k\ge 0 \\ j+k=n}}} H_\ga^{\odot j} \widehat\otimes L^2(\nu)^{\odot k}.$$
We use the convention that $G^{\odot0} = \R$ for any Hilbert space $G$ and
recall that $\widehat \otimes$ refers to the Hilbertian completion of the algebraic tensor
product.
We set
$$\H:= \H_1 = (H_\ga\widehat\otimes \R) \oplus (\R\widehat \otimes L^2(\nu)) = H_\ga \oplus L^2(\nu).$$

Having defined $D_\ga$ (respectively $D_\pi$) as closed densely defined operators
from $L^2(\ga)$  into $L^2(\ga)\widehat\otimes H_\ga$ (respectively from
$L^2(\pi)$ into $L^2(\pi\times\nu) =
L^2(\pi)\widehat \otimes L^2(\nu)$),
we now identify both $L^2(\ga)\widehat\otimes H_\ga$ and $L^2(\pi)\widehat \otimes L^2(\nu)$
canonically with closed subspaces of $(L^2(\ga)\widehat \otimes L^2(\pi))\widehat \otimes (H_\ga \oplus L^2(\nu))
= L^2(\ga\times \pi;\H)$. We denote by $D_\ga\otimes I$ and $I\otimes D_\pi$ the resulting closed and densely defined
operators from $L^2(\ga)\widehat\otimes L^2(\pi) = L^2(\ga\times \pi)$ into $L^2(\ga\times \pi;\H)$, and define
$$ D = D_\ga\otimes I + I\otimes D_\pi.$$
By part (1) of the previous lemma, after completing we can consider $D_\ga\otimes I$ and $I\otimes D_\pi$ as closed and densely defined
operators from  $L^2(\ga\times \pi;\H_n)$ into $L^2(\ga\times \pi;\H_{n+1})$,

By combining the preceding two lemmas we obtain the following result.
\begin{proposition} For all $n=0,1,2,\dots$, the operator $D = D_\ga\otimes I + I\otimes D_\pi$
is closable as a densely defined operator from $ L^2(\ga\times \pi;\H_n)$ into $L^2(\ga\times \pi;\H_{n+1})$.
\end{proposition}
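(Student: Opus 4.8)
The plan is to obtain the proposition by feeding the two preceding lemmas into each other, one Fock level at a time. First I would record the inputs: by the first lemma of this section, together with its Gaussian counterpart established in \cite{AppNee}, both $D_\ga$ and $D_\pi$ are closable, and after passing to their closures we may treat them as \emph{closed} densely defined operators, $D_\ga$ from $L^2(\ga)$ into $L^2(\ga)\widehat\otimes H_\ga$ and $D_\pi$ from $L^2(\pi)$ into $L^2(\pi)\widehat\otimes L^2(\nu)$. This is exactly the hypothesis required by the tensor-product lemma.

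Next I would fix $n$ and make the action of $D=D_\ga\otimes I+I\otimes D_\pi$ between the graded spaces explicit. Writing $L^2(\ga\times\pi;\H_n)=L^2(\ga)\widehat\otimes L^2(\pi)\widehat\otimes\H_n$, the summand $D_\ga\otimes I$ differentiates in the $L^2(\ga)$-variable and, after symmetrisation, appends the new $H_\ga$-factor to the component $H_\ga^{\odot j}\widehat\otimes L^2(\nu)^{\odot k}$, raising $j$ to $j+1$; likewise $I\otimes D_\pi$ appends an $L^2(\nu)$-factor, raising $k$ to $k+1$. In both cases the total degree $j+k$ increases by one, so each summand indeed maps $L^2(\ga\times\pi;\H_n)$ into $L^2(\ga\times\pi;\H_{n+1})$, and, as remarked just before the statement, part~(1) of the tensor-product lemma shows each summand is closable. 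The proposition is now the assertion that the \emph{sum} is closable, which is part~(2) of that lemma applied with $T_1=D_\ga$ and $T_2=D_\pi$: invoking the criterion that a densely defined Hilbert-space operator is closable if and only if its adjoint is densely defined, one uses the operator inclusion $D_\ga\s\otimes I+I\otimes D_\pi\s\subseteq D\s$, whose left-hand side is densely defined because $D_\ga\s$ and $D_\pi\s$ are (adjoints of closed densely defined operators are densely defined).

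The main obstacle I anticipate is bookkeeping rather than hard analysis: reconciling the abstract lemma, phrased for a single Hilbert tensor product, with the symmetric-tensor grading hidden in the $\odot$-notation of $\H_n$. Concretely I would need to verify (i) that the natural domain $\Dom(D_\ga)\otimes\Dom(D_\pi)$, tensored with the spectator factors of $\H_n$, is dense in $L^2(\ga\times\pi;\H_n)$, and (ii) that the symmetrisations built into the identifications $L^2(\ga\times\pi;\H_n)\to L^2(\ga\times\pi;\H_{n+1})$ are implemented by bounded orthogonal projections, so that appending a factor and then symmetrising neither destroys closability nor breaks the adjoint inclusion above. Since each symmetrisation is a contraction and, from any fixed component, the two summands raise the bidegree in the two different directions $(j{+}1,k)$ and $(j,k{+}1)$, these checks are routine, and carrying out part~(2) of the lemma on each fixed level $n$ completes the proof.
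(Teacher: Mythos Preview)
Your proposal is correct and follows essentially the same approach as the paper, which simply states that the proposition is obtained ``by combining the preceding two lemmas'' without spelling out any details. Your extra care about the bookkeeping with the symmetric-tensor grading of $\H_n$ and the symmetrisation projections is a reasonable elaboration of what the paper leaves implicit.
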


We define the $n$-fold stochastic integral on $I_n: \H_n\to L^2(\Omega)$ by
$$ I_n (f\otimes g) :=  I_{j,\gamma} f \otimes I_{k,\pi}g$$
for $f\in H_\ga^{\odot j}$ and $g\in L^2(\nu)^{\odot k}$ with $j+k=n$,
where we denote the `Gaussian' and the `Poissonian' integrals with subscripts
$\ga$ and $\pi$, respectively.

In what follows, in order to tidy up the notation
we will refrain from writing subscripts $\gamma$ and $\pi$;
expectations taken in the the left and right sides of tensor products
refer to $\gamma$ and $\pi$, respectively.

Let $\Pi$ be a Poisson random measure on a probability space $(\Omega,\calF,\P)$,
whose intensity measure $\nu$ is a L\'evy measure on $E$.
Recall that the former means that $\Pi$ is a random
variable on $(\Omega,\calF,\P)$ taking values in the space $\N(E)$ of
$\N$-valued measures on $E$ endowed with the $\sigma$-algebra generated
by the Borel sets of $E$, that is, the smallest $\sigma$-algebra which renders the mappings
$\xi\mapsto \xi(B)$
measurable for all $B\in \mathscr{B}(E)$.
By $\P_\Pi$
we denote the image measure of $\P$ under $\Pi$.

Following Last and Penrose \cite{LP}, for a measurable function
$f: \N(Y)\to \R$ and $y\in Y$ we define the measurable function
$\tilde D_y f: \N(Y)\to \R$ by
$$\tilde D_y f (\eta):= f(\eta+\delta_y) - f(\eta).$$
The function $\tilde D_{y_1,\hdots,y_n}^nf:\N(Y)\to\R$ is defined recursively by
$$ \tilde D^n_{y_1,\hdots,y_n}f = \tilde D_{y_n} \tilde D^{n-1}_{y_1,\hdots,y_{n-1}}f,$$ for $y_{1}, \ldots, y_{n} \in Y$.
This function is symmetric, i.e. it is invariant under any permutation of the variables.

Following \cite{AppNee},
we define $j: L^2(E, \mu)\to L^2(\P_{\Pi})$ by
 $$ j f(\eta) = f\Big(\xi + \int_{E} x \,\bar{\eta}(dx)\Big),
\quad \eta\in \N(E).$$
The rigorous interpretation of this identity is provided by noting that
$$ \n j f\n_{L^2(\P_\Pi)}^2 = \E \Big| f\Big(\xi +
\int_{E} x \,d\bar{\Pi}(x)\Big)\Big|^2 = \n
f\n_{L^2(E,\mu)}^2,$$
which means that $j f(\eta)$ is well-defined for $\P_\Pi$-almost all $\eta$
and that $j$ establishes an isometry from $ L^2(E, \mu)$ into
$L^2(\P_{\Pi}).$
Note that
$$ j f (\Pi) = f\Big(\xi +
\int_{E} x \,d\bar{\Pi}(x)\Big)$$
and $$ j \circ D = \tilde D \circ j.$$

We now have the
following extension to infinitely divisible measures of the corresponding
results of Stroock \cite{Str} (for Gaussian measures) and
 Last and Penrose \cite{LP} (for Poisson random measures):

\begin{proposition} \label{prop:S-LP}
 For all $f\in W^{\infty,2}(\ga)$ and $g\in L^2(\P_\Pi)$ we have
$$ f\otimes g(\Pi)= \sum_{m=0}^\infty \frac1{m!} I_m (\E (\tilde D^m f\otimes g(\Pi))).$$
\end{proposition}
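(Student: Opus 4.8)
The plan is to deduce the combined expansion from the two one-variable chaos expansions already at our disposal, namely Stroock's formula \cite{Str} for $f\in W^{\infty,2}(\ga)$ on the Gaussian side and the Last--Penrose formula \cite{LP} for $g(\Pi)$ on the Poisson side, by exploiting the product (independence) structure of the underlying noise. The starting point is the intertwining relation $j\circ D=\tilde D\circ j$ recorded above, which lets me identify the iterated combined difference operator $\tilde D^m$ acting on $f\otimes g(\Pi)$ with $(D_\ga\otimes I+I\otimes D_\pi)^m$ acting on the corresponding element of $L^2(\ga)\widehat\otimes L^2(\pi)$. Thus the whole proof reduces to a bookkeeping computation with $D=D_\ga\otimes I+I\otimes D_\pi$, followed by an appeal to the two known expansions.

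First I would expand $D^m$ binomially. Since $D_\ga\otimes I$ and $I\otimes D_\pi$ act on different tensor factors they commute, so
\[
 D^m=\sum_{\substack{j,k\ge 0\\ j+k=m}}\binom{m}{j}\,(D_\ga^{\,j}\otimes I)(I\otimes D_\pi^{\,k}).
\]
Applying this to $f\otimes g(\Pi)$, and using that $D_\ga$ differentiates only the Gaussian factor while $\tilde D$ differences only the Poisson variable, I obtain
\[
 \tilde D^m\big(f\otimes g(\Pi)\big)=\sum_{j+k=m}\binom{m}{j}\,\big(D_\ga^{\,j}f\big)\otimes\big(\tilde D^{k}g(\Pi)\big).
\]
Taking expectations, with $\E=\E_\ga\otimes\E_\pi$ factoring across the tensor product per the convention fixed above, sends the $(j,k)$ summand into $H_\ga^{\odot j}\widehat\otimes L^2(\nu)^{\odot k}$ (each iterated derivative is already symmetric within its own factor), so that $\E(\tilde D^m(f\otimes g(\Pi)))\in\H_m$ with
\[
 \E\big(\tilde D^m(f\otimes g(\Pi))\big)=\sum_{j+k=m}\binom{m}{j}\,\big(\E\,D_\ga^{\,j}f\big)\otimes\big(\E\,\tilde D^{k}g(\Pi)\big).
\]

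Next I would apply $I_m$, using its definition $I_m=I_{j,\gamma}\otimes I_{k,\pi}$ on the summand of bidegree $(j,k)$, divide by $m!$, and sum over $m$. The elementary identity $\binom{m}{j}/m!=1/(j!\,k!)$ lets me reindex the resulting double series by $(j,k)$ and factor it as a tensor product of two single series:
\[
 \sum_{m=0}^\infty\frac1{m!}I_m\big(\E\,\tilde D^m(f\otimes g(\Pi))\big)
 =\Big(\sum_{j=0}^\infty\frac1{j!}I_{j,\gamma}(\E\,D_\ga^{\,j}f)\Big)\otimes\Big(\sum_{k=0}^\infty\frac1{k!}I_{k,\pi}(\E\,\tilde D^{k}g(\Pi))\Big).
\]
Stroock's formula identifies the first factor with $f$ and the Last--Penrose formula identifies the second with $g(\Pi)$, giving exactly $f\otimes g(\Pi)$.

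The hard part will be making the last display rigorous, i.e. justifying convergence of the double series together with its rearrangement and factorisation. Here I would lean on orthogonality: the integrals $I_{j,\gamma}(\cdot)$ are mutually orthogonal in $L^2(\ga)$ and the $I_{k,\pi}(\cdot)$ are mutually orthogonal in $L^2(\P_\Pi)$, and because the Gaussian and Poisson noises are independent the cross expectations factorise, so that $\n f\otimes g(\Pi)\n_{L^2(\Omega)}^2=\n f\n_{L^2(\ga)}^2\,\n g(\Pi)\n_{L^2(\P_\Pi)}^2$ and the family $\{I_{j,\gamma}(\cdot)\otimes I_{k,\pi}(\cdot)\}_{j,k}$ is orthogonal in $L^2(\Omega)$. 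By Parseval the sum of squared norms factorises into a product of two convergent series, so the double series converges unconditionally in $L^2(\Omega)$; this legitimises both summing first over $(j,k)$ and regrouping by the total degree $m=j+k$. I would also note at the outset that every term is well defined: $f\in W^{\infty,2}(\ga)$ guarantees $D_\ga^{\,j}f\in L^2(\ga;H_\ga^{\odot j})$ for all $j$, while the Last--Penrose theory produces the symmetric kernels $\E\,\tilde D^{k}g(\Pi)\in L^2(\nu)^{\odot k}$ for arbitrary $g\in L^2(\P_\Pi)$, so no extra regularity on $g$ is required.
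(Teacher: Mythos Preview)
Your proposal is correct and follows essentially the same route as the paper: both arguments expand $\tilde D^m(f\otimes g(\Pi))$ by the Leibniz/binomial rule, apply $I_m$ to each bidegree $(j,k)$ summand, reindex the double sum using $\binom{m}{j}/m!=1/(j!\,k!)$, and then invoke Stroock's formula on the Gaussian factor and the Last--Penrose formula on the Poisson factor. Your additional paragraph justifying convergence and rearrangement via orthogonality is a welcome elaboration that the paper leaves implicit.
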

\begin{proof}
By Leibniz's rule,
\begin{align*}
 \sum_{m=0}^\infty \frac1{m!} I_m \E_\la \tilde D^m f\otimes g(\Pi)
  & = \sum_{m=0}^\infty \frac1{m!} I_m \Big(\E_\la \sum_{\ell=0}^m \binom{m}{\ell}
 \tilde D^\ell f \otimes \tilde D^{m-\ell}g(\Pi)\Big)
 \\ & =  \sum_{m=0}^\infty \sum_{\ell=0}^m \frac1{\ell!(m-\ell)!} I_m \Big(\E_\la
 \big(\tilde D^\ell f \otimes \tilde D^{m-\ell}g(\Pi) \big) \Big)
 \\ & = \sum_{m=0}^\infty \sum_{\ell=0}^m \frac1{\ell!(m-\ell)!} I_{\ell} (\E_\la
 \tilde D^\ell f) \otimes I_{m-\ell} (\E_\pi \tilde D^{m-\ell}g(\Pi))
 \\ &  = \sum_{j=0}^\infty \frac1{j!} I_{j} (\E_\ga  \tilde D^j f) \otimes
 \sum_{k=0}^\infty \frac1{k!} I_{k} (\E_\pi  \tilde D^k g(\Pi))
 \\ & = f\otimes g(\Pi).
\end{align*}
using the Stroock and Last-Penrose type decompositions in the penultimate identity.
\end{proof}

We now return to the setting considered in Section \ref{sec:prelim}
and make the standing assumption that the equivalent conditions stated in Proposition \ref{prop:skew} are satisfied.
Thus we assume that $\la_1 = \ga_1*\pi_1$ on $E_1$, $\la_2 = \ga_2*\pi_2$ on $E_2$, and
that $T:E_1\to E_2$ is a Borel linear
skew mapping with respect to the pair $(\la_1,\la_2)$ with an infinite divisible skew factor.
As is shown by Proposition \ref{prop:skew}, this implies that $T$ is skew with respect to
both pairs $(\ga_1,\ga_2)$ and $(\pi_1,\pi_2)$, that is,
$T\ga_1* \rho_\ga = \ga_2$ and $ T\pi_1* \rho_\pi = \pi_2$.

It follows Proposition \ref{prop:skew} that we may define $P_T: L^2(E_2,\la_2)\to L^2(E_1,\la_1)$  by
$$ P_T f(x) := \int_{E_2} f(Tx+y)\,d\rho(y), \quad x\in E_1,$$
where $\rho:=  \rho_\ga*\rho_\pi$ is the skew factor on $E_2$, i.e., $T\la_1*\rho = \la_2$.
Similarly we can define an operator $P_T\otimes P_T:
L^2(\gamma_2)\otimes L^2(\pi_2)\to L^2(\gamma_1)\otimes L^2(\pi_1)$ in the obvious way
(with slight abuse of notation; we should really be writing
$P_{\ga,T}\otimes P_{\pi,T}$) and we then have:

\begin{lemma} Under the above assumptions,
$F_{P_T f} = (P_T\otimes P_T)F_f$.
\end{lemma}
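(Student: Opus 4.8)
The plan is to unwind both sides of the claimed identity $F_{P_T f} = (P_T\otimes P_T)F_f$ as functions on $E_1\times E_1$ and show they agree pointwise (equivalently, $\la_1\times\la_1$-almost everywhere, which suffices since both are elements of $L^2(\ga_1)\widehat\otimes L^2(\pi_1)$). Recall that $F_g$ is defined by $F_g(u,v) = g(u+v)$, so by definition $F_{P_T f}(u,v) = (P_T f)(u+v) = \int_{E_2} f(T(u+v)+z)\,d\rho(z)$. Using the linearity of $T$ together with $\rho = \rho_\ga*\rho_\pi$, I would rewrite this as a double integral
\begin{align*}
F_{P_T f}(u,v) = \int_{E_2}\int_{E_2} f\bigl(Tu + Tv + z_\ga + z_\pi\bigr)\,d\rho_\ga(z_\ga)\,d\rho_\pi(z_\pi),
\end{align*}
where I have split the skew factor using the Fubini-type factorisation that the convolution structure provides.

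Next I would compute the right-hand side. The operator $P_T\otimes P_T = P_{\ga,T}\otimes P_{\pi,T}$ acts on the first tensor slot (an $L^2(\ga_2)$-variable) via the Gaussian skew factor $\rho_\ga$ and on the second slot (an $L^2(\pi_2)$-variable) via the Poissonian skew factor $\rho_\pi$. Applying it to $F_f$, whose value at $(u',v')$ is $f(u'+v')$, yields
\begin{align*}
\bigl((P_T\otimes P_T)F_f\bigr)(u,v) = \int_{E_2}\int_{E_2} f\bigl((Tu + z_\ga) + (Tv + z_\pi)\bigr)\,d\rho_\ga(z_\ga)\,d\rho_\pi(z_\pi),
\end{align*}
where $P_{\ga,T}$ integrates the first argument of $F_f$ against $\rho_\ga$ and $P_{\pi,T}$ integrates the second against $\rho_\pi$. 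Since $(Tu+z_\ga)+(Tv+z_\pi) = Tu + Tv + z_\ga + z_\pi$, the two expressions coincide, establishing the identity.

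The main technical point, and the one I would treat most carefully, is the justification that $P_T\otimes P_T$ acts on $F_f$ exactly by this slot-wise double integration. This requires that $F_f$ genuinely lie in (a dense subspace of) $L^2(\ga_2)\widehat\otimes L^2(\pi_2)$ so that the tensor-product operator is defined on it, and that the product integral can be evaluated as an iterated integral. The first is handled by the isometry $g\mapsto F_g$ from $L^2(\la_2)$ into $L^2(\ga_2)\widehat\otimes L^2(\pi_2)$ recorded earlier in the excerpt; the second follows from Fubini's theorem, whose hypotheses hold because $P_{\ga,T}$ and $P_{\pi,T}$ are contractions and the integrands are $L^2$. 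I would first verify the identity on a dense class of elementary tensors $f = a\otimes b$ (or on bounded cylindrical $f$), where the factorisation $F_{a\otimes b} = (\text{function of first slot})\otimes(\text{function of second slot})$ makes the slot-wise action transparent, and then extend to all $f\in L^2(\la_2)$ by continuity, using that both $F_{P_T(\cdot)}$ and $(P_T\otimes P_T)F_{(\cdot)}$ are bounded linear maps on $L^2(\la_2)$. The only subtlety to watch is the mismatch between the reference measures on the two sides (the $\la_i$ versus the product $\ga_i\times\pi_i$), which is reconciled precisely by the isometry identifying $L^2(\la_i)$ with a closed subspace of $L^2(\ga_i\times\pi_i)$.
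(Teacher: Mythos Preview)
Your proposal is correct and follows essentially the same route as the paper: establish the slot-wise integral formula for $P_T\otimes P_T$ on elementary tensors in $L^2(\ga_2)\widehat\otimes L^2(\pi_2)$, approximate $F_f$ by such tensors, pass to the limit, and then collapse the double integral using $\rho = \rho_\ga*\rho_\pi$. One small slip in wording: it is $F_f$, not $f$, that gets approximated by elementary tensors $a\otimes b$ (since $f$ lives in $L^2(\la_2)$, not in the tensor product), which is exactly how the paper organises the approximation.
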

\begin{proof}
For $(\gamma\times\pi)$-almost all $x,y\in E_2$ we have
\begin{align*}
 (P_T\otimes P_T)(\phi\otimes \psi)(x,y) & = (P_T\phi\otimes P_T\psi)(x,y)
 \\ & = \int_{E_2} \phi(Tx+z)\,d\rho_\gamma(z)\int_{E_2} \psi(Ty+z)\,d\rho_\pi(z)
\\ &  = \int_{E_2}\int_{E_2} (\phi\otimes \psi)(Tx+z_1, Ty+z_2)\,d\rho_\ga(z_1)\,d\rho_\pi(z_2).
 \end{align*}
Now suppose that $F_f = \limn G_n$ in $L^2(\gamma\times \pi)$, where each $G_n$ belongs
to the algebraic tensor product $L^2(\gamma)\otimes L^2(\pi)$. By the above identity and linearity it follows,
after passing to a subsequence if necessary, that for $(\gamma\times\pi)$-almost all $x,y\in E_2$ we have
\begin{align*}(P_T\otimes P_T) F_f(x,y)
& = \limn  (P_T\otimes P_T)G_n(x,y)
\\ & = \limn \int_{E_2}\int_{E_2} G_n (Tx+z_1, Ty+z_2)\,d\rho_\ga(z_1)\,d\rho_\pi(z_2)
\\ & =  \int_{E_2}\int_{E_2} F_f(Tx+z_1, Ty+z_2)\,d\rho_\ga(z_1)\,d\rho_\pi(z_2)
\\ & =  \int_{E_2}\int_{E_2} f(Tx+Ty+z_1+z_2)\,d\rho_\ga(z_1)\,d\rho_\pi(z_2)
\\ & =  \int_{E_2} f(Tx+ Ty+z)\,d(\rho_\ga*\rho_\pi)(z)
\\ & =  \int_{E_2} f(Tx+ Ty+z)\,d\rho(z)
\\ & =  P_T f(x+y)
\\ & = F_{P_T}(x,y).
\end{align*}
\end{proof}

For $h\in H_\ga$ and  $y_1,\dots, y_n\in E$ and $h\in H_\ga$ we define
$$ D_{h;y} := D_{h} \otimes I + I \otimes D_{y},$$
where
$$ D_{h} f (x):= \lb D_\gamma f(x),h\rb, \quad  D_{y}g(x):= (D_\pi g(x))(y).$$
For the higher order derivatives we define inductively
$$ D_{h_1,\dots,h_n;y_1,\hdots,y_n}^n := D_{h_n;y_n} D_{h_1,\dots,h_{n-1};y_1,\hdots,y_{n-1}}^{n-1}.$$
\begin{lemma}\label{lem:commute}
For all
$f\in L^2(E_{2},\la_2)$, $h\in H_\ga$, and $y_1,\dots, y_n\in
E_{1}$,
\begin{align}\label{eq:higher-der-Poisson}
\E_{\ga_1\times\pi_1} D_{h_1,\dots,h_n;y_1,\hdots,y_n}^n F_{P_T f} =\E_{\ga_2\times\pi_2}
D_{Th_1\dots,Th_n; Ty_1,\hdots,Ty_n}^n F_f.
\end{align}
\end{lemma}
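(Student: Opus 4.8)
The plan is to exploit the tensor structure of the operator $D^n_{h_1,\dots,h_n;y_1,\dots,y_n}$ together with the factorisation $F_{P_T f} = (P_T\otimes P_T)F_f$ from the preceding lemma, bearing in mind that $P_T\otimes P_T$ means $P_{\ga,T}\otimes P_{\pi,T}$. First I would record that the operators appearing in $D_{h_i;y_i} = D_{h_i}\otimes I + I\otimes D_{y_i}$ pairwise commute: the Gaussian factors $D_{h_i}$ act only on the first tensor slot and commute among themselves (being directional derivatives, their mixed second derivatives are symmetric), while the Poissonian factors $D_{y_j}$ act only on the second slot and commute by the symmetry of $\tilde D^n$ noted earlier. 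Expanding the product therefore gives
$$ D^n_{h_1,\dots,h_n;y_1,\dots,y_n} = \sum_{S\subseteq\{1,\dots,n\}} \Big(\prod_{i\in S}D_{h_i}\Big)\otimes\Big(\prod_{j\notin S}D_{y_j}\Big), $$
and likewise on the right-hand side of \eqref{eq:higher-der-Poisson} with each $h_i$ replaced by $Th_i$ and each $y_j$ by $Ty_j$ (here $T$ on $H_{\ga_1}$ is the induced map $H_{\ga_1}\to H_{\ga_2}$ supplied by \eqref{use}). It thus suffices to prove the identity term by term, that is, for each subset $S$.

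For a fixed $S$ I would substitute $F_{P_T f} = (P_{\ga,T}\otimes P_{\pi,T})F_f$ and push the derivatives through the factorised operator, a Gaussian operator composed with $P_{\ga,T}$ remaining on the first slot and a Poissonian operator composed with $P_{\pi,T}$ on the second:
$$ \Big(\prod_{i\in S}D_{h_i}\Big)\otimes\Big(\prod_{j\notin S}D_{y_j}\Big)F_{P_T f} = \Big(\big(\textstyle\prod_{i\in S}D_{h_i}\big)P_{\ga,T}\Big)\otimes\Big(\big(\textstyle\prod_{j\notin S}D_{y_j}\big)P_{\pi,T}\Big)F_f. $$
Taking $\E_{\ga_1\times\pi_1}$ and writing $F_f$ as an $L^2(\ga_2\times\pi_2)$-limit of elements of the algebraic tensor product $L^2(\ga_2)\otimes L^2(\pi_2)$, the expectation of a pure tensor factorises into the product of a $\ga_1$-expectation and a $\pi_1$-expectation. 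This reduces the claim to the two base identities
$$ \E_{\ga_1}\Big[\big(\textstyle\prod_{i\in S}D_{h_i}\big)P_{\ga,T}\phi\Big] = \E_{\ga_2}\Big[\big(\textstyle\prod_{i\in S}D_{Th_i}\big)\phi\Big], \qquad \E_{\pi_1}\Big[\big(\textstyle\prod_{j\notin S}D_{y_j}\big)P_{\pi,T}\psi\Big] = \E_{\pi_2}\Big[\big(\textstyle\prod_{j\notin S}D_{Ty_j}\big)\psi\Big], $$
after which re-summing over $S$ recovers \eqref{eq:higher-der-Poisson}.

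The Poissonian base identity I would settle by a direct Fubini computation: by inclusion--exclusion $\prod_{j\notin S}D_{y_j}$ is an alternating sum of shifts $g\mapsto g(\,\cdot\,+\sum_{j\in A}y_j)$, and for each such shift the skew relation $T\pi_1*\rho_\pi = \pi_2$ together with the linearity of $T$ (so that $T(\sum_{j\in A}y_j)=\sum_{j\in A}Ty_j$) turns $\int_{E_1}(P_{\pi,T}\psi)(x+\sum_{j\in A}y_j)\,d\pi_1(x)$ into $\int_{E_2}\psi(v+\sum_{j\in A}Ty_j)\,d\pi_2(v)$; summing with the alternating signs yields the claimed right-hand side. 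The Gaussian base identity has the same shape and rests on $T\ga_1*\rho_\ga=\ga_2$, equivalently \eqref{use}, which is exactly what makes $Th_i\in H_{\ga_2}$ meaningful; here the argument is the Gaussian integration-by-parts intertwining already used in the purely Gaussian case of \cite{AppNee, CMG1}, so I would either cite it or reproduce the short computation on cylindrical test functions.

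The main obstacle is analytic rather than algebraic. Since $D_{h_i}$ and $D_{y_j}$ are unbounded and merely closed, the formal steps above — pushing derivatives through $P_T$, interchanging them with the $L^2$-limit defining $F_f$, and interchanging them with the integrations hidden in $\E$ — must be justified on a domain on which all $n$-fold derivatives exist. I would therefore first establish \eqref{eq:higher-der-Poisson} for $f$ in the dense class of smooth cylindrical functions, for which both $F_f$ and $F_{P_T f}$ lie in the iterated domains by the estimates preceding the closability lemmas and by the fact that $P_T$ preserves cylindricity (convolution against $\rho_\ga,\rho_\pi$ and composition with the adjoint of $T$), carry out the Fubini interchanges there, and only afterwards pass to general $f\in L^2(E_2,\la_2)$ by closedness of the operators and continuity of $\E$. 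Verifying that the mixed $S$-terms genuinely remain in $\Dom(\prod_{i\in S}D_{h_i})\widehat\otimes\Dom(\prod_{j\notin S}D_{y_j})$, so that the factorisation of the expectation is legitimate, is where the real care will be required.
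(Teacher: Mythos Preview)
Your proposal is correct and follows essentially the same route as the paper: both arguments reduce \eqref{eq:higher-der-Poisson} to the separate Gaussian and Poissonian intertwining identities already established in \cite{AppNee}, via the Leibniz/tensor expansion of $D^n$ acting on (approximations by) elementary tensors, and then pass to general $f$ by closedness of the derivatives. The paper's proof is simply the terse three-sentence version of what you have spelled out in detail; your subset-indexed expansion of $D^n_{h_1,\dots,h_n;y_1,\dots,y_n}$ is exactly what one obtains when applying the iterated $D_{h_i;y_i}$ to an elementary tensor $\phi\otimes\psi$ and then taking factorised expectations.
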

\begin{proof}
We approximate $F_f$ by finite sums of elementary tensors as in the proof of the previous lemma.
For such functions $G_n$ the identity follows from the results in \cite{AppNee} for the Gaussian and Poissonian case.
Thanks to the closedness of the derivative operators, the identity passes over to the limit.
\end{proof}

For Hilbert spaces $H$ and $\underline H$ we note that
$$ \Gamma(H\oplus \underline H) = \bigoplus_{n=0}^\infty\Big(\bigoplus_{{\substack{j,k\ge 0 \\ j+k=n}}}
H^{\odot j}\widehat\otimes \underline H^{\odot k}\Big).$$

Putting everything together we obtain the following result which generalises the results of Theorems 3.5 and 4.4 of \cite{AppNee}, where Gaussian and Poisson noises were treated separately.

\begin{theorem} Under the standing assumption stated above, the following diagram commutes:
\begin{equation*}
  \begin{CD}
L^2(E_2,\la_2)     @> P_T >>  L^2(E_1,\la_1)  \\
   @V   f\mapsto F_f  VV
   @VV f \mapsto F_f   V \\
   L^2(E_2\times E_2,\ga_2\times \pi_2)  @> P_T\otimes P_T >> L^2(E_1\times E_1,\ga_1\times \pi_1)   \\
    @V  \bigoplus_{n=0}^\infty \frac1{\sqrt{n!}}\E_{\ga_2\times\pi_2} D^n   VV
    @VV \bigoplus_{n=0}^\infty \frac1{\sqrt{n!}}\E_{\ga_1\times\pi_1} D^n   V \\
\Gamma((H_{\ga,2}\oplus L^2(E_2,\nu_2))@ >\bigoplus_{n=0}^\infty (T^*)^{\odot n} >>\Gamma(H_{\ga,1}\oplus L^2(E_1,\nu_1))
      \end{CD}
 \end{equation*}
 Moreover, for $k=1,2$ also the following diagram commutes in distribution if $X_k$ is an $E$-valued random variable with
  distribution $\la_k$:
 \begin{equation*}
   \begin{CD}
 L^2(E_k\times E_k,\ga_k\times \pi_k)     @> (f,g)\mapsto (f(X_{\ga,k}), f(X_{\pi,k})) >>   L^2(\Omega\times\Omega) \\
     @V \bigoplus_{n=0}^\infty \frac1{\sqrt{n!}}\E_{\ga_k\times\pi_k} D^n   VV
     @AA \bigoplus_{n=0}^\infty \frac1{\sqrt{n!}}I_n A \\
  \Gamma(H\oplus L^2(E_k,\nu_k))@ > = >> \Gamma(H\oplus L^2(E_k,\nu_k))
       \end{CD}
  \end{equation*}
\end{theorem}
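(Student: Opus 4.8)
The plan is to treat the two diagrams separately, and to deal with the first by splitting it into its upper and lower squares. The upper square, which relates the two copies of the isometry $f\mapsto F_f$ through $P_T$ and $P_T\otimes P_T$, commutes by the lemma asserting $F_{P_T f}=(P_T\otimes P_T)F_f$. Hence the commutativity of the whole first diagram reduces to that of the lower square, i.e.\ to the identity
$$ \Big(\bigoplus_{n}(T\s)^{\odot n}\Big)\circ\Big(\bigoplus_{n}\tfrac1{\sqrt{n!}}\E_{\ga_2\times\pi_2}D^n\Big)
= \Big(\bigoplus_{n}\tfrac1{\sqrt{n!}}\E_{\ga_1\times\pi_1}D^n\Big)\circ(P_T\otimes P_T), $$
which I would check level by level in the Fock grading, working first on the dense set of finite sums of elementary tensors $\phi\otimes\psi\in L^2(\ga_2)\otimes L^2(\pi_2)$ and then extending by closedness of the derivatives together with the contractivity of $P_T\otimes P_T$ and of $\Gamma(T\s)$.

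For the $n$th level I would test both sides against an elementary tensor built from $h_1,\dots,h_n\in H_{\ga,1}$ and $y_1,\dots,y_n\in E_1$. Pairing $\E_{\ga_1\times\pi_1}D^n$ against such a tensor (interpreting the Poisson slots as point evaluations, which recover the $L^2(\nu_1)$-components $\nu_1$-a.e.) produces exactly the scalar operator $\E_{\ga_1\times\pi_1}D^n_{h_1,\dots,h_n;y_1,\dots,y_n}$ introduced before Lemma~\ref{lem:commute}. Applying that lemma to $F_{P_Tf}$ converts this into $\E_{\ga_2\times\pi_2}D^n_{Th_1,\dots,Th_n;Ty_1,\dots,Ty_n}F_f$, which is the pairing of $\E_{\ga_2\times\pi_2}D^n F_f$ against the tensor obtained by replacing each $h_i$ by $Th_i$ and each $y_j$ by $Ty_j$. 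Dualising the one-particle contraction induced by $T$ (acting as $h\mapsto Th$ on the Gaussian factor and as the pullback $y\mapsto Ty$ on the Poisson factor) turns this into the pairing of $(T\s)^{\otimes n}\E_{\ga_2\times\pi_2}D^n F_f$ against the original tensor; since $D^n$ is symmetric and the coefficient tensors already lie in $\H_n$, the full power $(T\s)^{\otimes n}$ may be replaced by $(T\s)^{\odot n}$, which is the lower square.

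The second diagram is, after unwinding the arrows, a direct transcription of the Stroock--Last--Penrose expansion of Proposition~\ref{prop:S-LP}. Reading the square from the upper-left corner around through the identity on the bottom and up the right-hand isometry $\bigoplus_n\tfrac1{\sqrt{n!}}I_n$, commutativity amounts to the statement that for every $F\in L^2(E_k\times E_k,\ga_k\times\pi_k)$ one has, as an identity in $L^2(\Omega\times\Omega)$ and hence in distribution, $ F(X_{\ga,k},X_{\pi,k})=\sum_{n}\tfrac1{n!}I_n\big(\E_{\ga_k\times\pi_k}D^n F\big)$. This is exactly Proposition~\ref{prop:S-LP} once $F$ is approximated by tensors $f\otimes g$, the isometry of each $I_n$ on $\H_n$ together with the orthogonality of distinct chaos levels guaranteeing convergence of the series.

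The step I expect to be the main obstacle is the lower square of the first diagram, specifically the bookkeeping that identifies the componentwise action of the abstract second-quantised contraction $\Gamma(T\s)=\bigoplus_n(T\s)^{\odot n}$ with the substitution $h_i\mapsto Th_i$, $y_j\mapsto Ty_j$ delivered by Lemma~\ref{lem:commute}. One must take care that the correct Hilbert-space adjoint appears -- the contraction induced by $T$ runs from the one-particle space over $E_1$ into that over $E_2$, so the operator acting on the Fock spaces in the direction required by the diagram is its adjoint $T\s$ -- and that the passage from $(T\s)^{\otimes n}$ to the symmetric power $(T\s)^{\odot n}$ is legitimate, which it is precisely because the coefficient tensors $\E D^n F$ lie in the symmetric subspace $\H_n$.
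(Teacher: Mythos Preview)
Your proposal is correct and follows the paper's approach exactly: the paper offers no explicit proof but presents the theorem as the result of ``putting everything together'', meaning precisely the three ingredients you invoke --- the lemma $F_{P_Tf}=(P_T\otimes P_T)F_f$ for the upper square, Lemma~\ref{lem:commute} (reduced to elementary tensors via the Gaussian and Poissonian cases from \cite{AppNee}) for the lower square, and Proposition~\ref{prop:S-LP} for the second diagram. One small point of exposition: for the commutativity of the full first diagram you only need the lower square to commute on the range of $f\mapsto F_f$, so Lemma~\ref{lem:commute} as stated already suffices and you need not extend to general $F\in L^2(\ga_2\times\pi_2)$; your density argument via elementary tensors is the content of the \emph{proof} of that lemma rather than an additional step.
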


\medskip \

\end{document}